\topskip \setlength{\parindent}{0pt} \setlength{\parskip}{5pt plus
\numberwithin{equation}{section}
\newtheorem{theorem}{Theorem}[section]
\newtheorem{corollary}[theorem]{Corollary}
\newtheorem{identity}[theorem]{Identity}
\begin{document}

\pagenumbering{arabic}
\pagestyle{headings}
\def\sof{\hfill\rule{2mm}{2mm}}
\def\llim{\lim_{n\rightarrow\infty}}

\title{Combinatorial Identities for Incomplete Tribonacci Polynomials}
\maketitle

\begin{center}
\author{Mark Shattuck\\
\small Department of Mathematics, University of Tennessee, Knoxville, TN 37996\\[-0.8ex]
\small\texttt{shattuck@math.utk.edu}\\[1.8ex]}
\end{center}

%===========================================================================
\section*{abstract}

The incomplete tribonacci polynomials, denoted by $T_n^{(s)}(x)$, generalize the usual tribonacci polynomials and were introduced in \cite{JR2}, where several algebraic identities were shown.  In this paper, we provide a combinatorial interpretation for $T_n^{(s)}(x)$ in terms of weighted linear tilings involving three types of tiles.  This allows one not only to supply combinatorial proofs of the identities for $T_n^{(s)}(x)$ appearing in \cite{JR2} but also to derive additional identities.  In the final section, we provide a formula for the ordinary generating function of the sequence $T_n^{(s)}(x)$ for a fixed $s$, which was requested in \cite{JR2}.  Our derivation is combinatorial in nature and makes use of an identity relating $T_n^{(s)}(x)$ to $T_n(x)$.\\

\noindent{Keywords}: tribonacci numbers, incomplete tribonacci polynomials, combinatorial proof

\noindent{2010 Mathematics Subject Classification}: 05A19, 05A15
%\thispagestyle{empty}
%===========================================================================

\section{Introduction}

The \emph{tribonacci numbers} $t_n$ are defined by the recurrence relation $t_n=t_{n-1}+t_{n-2}+t_{n-3}$ if $n \geq 3$, with initial values $t_0=0$ and $t_1=t_2=1$.  See sequence A000073 in OEIS \cite{Sl}.  The tribonacci numbers are given equivalently by the explicit formula
\begin{equation}\label{ine1}
t_{n+1}=\sum_{i=0}^{\left\lfloor\frac{n}{2}\right\rfloor}B(n-i,i), \qquad n \geq 0,
\end{equation}
where $B(n,i)=\sum_{j=0}^i \binom{i}{j}\binom{n-j}{i}$, as shown in \cite{B}.  The number $B(n,i)$ is the $n$-th row, $i$-th column entry of the \emph{tribonacci triangle} (see \cite{AH}).

The \emph{tribonacci polynomials} $T_n(x)$ were introduced in \cite{HB} and are defined by the recurrence $T_n(x)=x^2T_{n-1}(x)+xT_{n-2}(x)+T_{n-3}(x)$ if $n \geq 3$, with initial values $T_0(x)=0$, $T_1(x)=1$, and $T_2(x)=x^2$.  In analogy to \eqref{ine1}, the tribonacci polynomials are given by the following explicit formula (see \cite{JR2}):
\begin{equation}\label{ine2}
T_{n+1}(x)=\sum_{i=0}^{\left\lfloor\frac{n}{2}\right\rfloor}\sum_{j=0}^i \binom{i}{j}\binom{n-i-j}{i}x^{2n-3(i+j)}.
\end{equation}

The \emph{incomplete tribonacci polynomials} $T_n^{(s)}(x)$ were considered in \cite{JR2} and are defined as
\begin{equation}\label{ine3}
T_{n+1}^{(s)}(x)=\sum_{i=0}^{s}\sum_{j=0}^i \binom{i}{j}\binom{n-i-j}{i}x^{2n-3(i+j)}, \qquad 0 \leq s \leq \left\lfloor\frac{n}{2}\right\rfloor.
\end{equation}
Note that the incomplete tribonacci polynomials generalize the ordinary ones and reduce to them when $s=\left\lfloor\frac{n}{2}\right\rfloor$. The \emph{incomplete tribonacci number}, denoted by $t_n^{(s)}$, is defined as the value of $T_n^{(s)}(x)$ at $x=1$.  Incomplete Fibonacci numbers and polynomials have also been considered and are defined in a comparable way; see, e.g., \cite{PF,JR1}.  Some combinatorial identities for the incomplete Fibonacci numbers were given in \cite{BB} and a bi-periodic generalization was studied in \cite{JR3}.

In \cite{JR2}, several identities were derived for the incomplete tribonacci
numbers and polynomials using various algebraic methods.  In this paper, we supply combinatorial proofs of these identities using a weighted tiling interpretation of $T_n^{(s)}(x)$ (described in Theorem \ref{t1} below). In some cases, a further generalization of an identity can be given.  In addition, using our interpretation, one also can find other relations not given in \cite{JR2} that are satisfied by $T_n^{(s)}(x)$.  In the final section, we provide  an explicit formula for the generating function of $T_n^{(s)}(x)$, as requested in \cite{JR2}.  Our derivation is combinatorial in nature and makes use of some identities involving $T_n(x)$.

\section{Combinatorial interpretation for $T_{n}^{(s)}(x)$}

We will use the following terminology.  By a \emph{square}, \emph{domino}, or \emph{tromino}, we will mean, respectively, a $1 \times 1$, $2 \times 1$, or $3 \times 1$ rectangular tile.  A (linear) \emph{tiling} of length $n$ is a covering of the numbers $1,2,\ldots,n$ written in a row by squares, dominos, and trominos, where tiles of the same kind are indistinguishable.  Let $\mathcal{T}_n$ denote the set of all tilings of length $n$.  It is well known that $\mathcal{T}_n$ has cardinality $t_{n+1}$ (see, e.g., \cite[p. 36]{BQ}).  We will often represent squares, dominos, and trominos by the letters $r$, $d$, and $t$, respectively.  Thus, a member of $\mathcal{T}_n$ may be regarded as a word in the alphabet $\{r,d,t\}$ in which there are $n-2i-3j$, $i$, and $j$ occurrences of the letters $r$, $d$, and $t$, respectively, for some $i$ and $j$.

By a \emph{longer piece} within a member of $\mathcal{T}_n$, we will mean one that is either a domino or a tromino.  Given $0 \leq s \leq \left\lfloor\frac{n}{2}\right\rfloor$, let $\mathcal{T}_{n}^{(s)}$ denote the subset of $\mathcal{T}_n$ whose members contain \emph{at most} $s$ longer pieces.  For example, if $n=5$ and $s=1$, then $\mathcal{T}_5^{(1)}=\{r^5,dr^3,rdr^2,r^2dr,r^3d,tr^2,rtr,r^2t\}$. Note that $\mathcal{T}_n^{(s)}$ is all of $\mathcal{T}_n$ when $s=\left\lfloor\frac{n}{2}\right\rfloor$.  By a \emph{square-and-domino} tiling, we will mean a member of $\mathcal{T}_n$ that contains no trominos.

Given $\pi \in \mathcal{T}_n^{(s)}$, let $\delta(\pi)$ and $\nu(\pi)$ record the number of squares and dominos, respectively, in $\pi$.  We now provide a combinatorial interpretation of $T_{n+1}^{(s)}(x)$ in terms of linear tilings.

\begin{theorem}\label{t1}
The polynomial $T_{n+1}^{(s)}(x)$ is the distribution for the statistic $2\delta+\mu$ on $\mathcal{T}_n^{(s)}$.
\end{theorem}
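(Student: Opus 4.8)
The plan is to evaluate the generating function $\sum_{\pi \in \mathcal{T}_n^{(s)}} x^{2\delta(\pi) + \nu(\pi)}$ directly and to check, term by term, that it coincides with the right-hand side of \eqref{ine3}. First I would organize the members of $\mathcal{T}_n^{(s)}$ according to their number of longer pieces. Fix $i$ with $0 \le i \le s$, and let $j$ be the number of trominos among these $i$ longer pieces, so that $0 \le j \le i$ and there are $i-j$ dominos. Since a tiling has length $n$, the number of squares is forced to be $n - 2(i-j) - 3j = n - 2i - j$, whence the total number of tiles equals $(n-2i-j) + i = n - i - j$.

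Next I would enumerate the tilings carrying these data. Regarding $\pi$ as a word in $\{r,d,t\}$, one chooses which $i$ of the $n-i-j$ available tile slots hold longer pieces, in $\binom{n-i-j}{i}$ ways, and then which $j$ of those $i$ longer pieces are trominos, in $\binom{i}{j}$ ways. Hence exactly $\binom{i}{j}\binom{n-i-j}{i}$ members of $\mathcal{T}_n$ have $i$ longer pieces, $j$ of which are trominos, and every such tiling belongs to $\mathcal{T}_n^{(s)}$ precisely because $i \le s$.

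It remains to record the common weight of these tilings. With $\delta = n - 2i - j$ squares and $\nu = i - j$ dominos, I would compute
\[
2\delta + \nu = 2(n - 2i - j) + (i - j) = 2n - 3i - 3j = 2n - 3(i+j),
\]
which is exactly the exponent appearing in \eqref{ine3}. Summing $x^{2\delta(\pi)+\nu(\pi)}$ over $\mathcal{T}_n^{(s)}$ by first fixing $i$ and then $j$ therefore yields
\[
\sum_{\pi \in \mathcal{T}_n^{(s)}} x^{2\delta(\pi)+\nu(\pi)} = \sum_{i=0}^{s}\sum_{j=0}^{i} \binom{i}{j}\binom{n-i-j}{i}\, x^{2n-3(i+j)} = T_{n+1}^{(s)}(x),
\]
completing the argument.

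The computation itself is routine bookkeeping, so there is no serious obstacle; the one delicate point is setting up the correct dictionary between the summation indices of \eqref{ine3} and the tiling statistics. Specifically, one must recognize that $i$ counts \emph{all} longer pieces rather than dominos alone, that $j$ counts trominos, and that the product $\binom{i}{j}\binom{n-i-j}{i}$ genuinely enumerates the corresponding tilings. Once this identification is in place, matching $2n - 3(i+j)$ with $2\delta + \nu$ and reading the restriction $i \le s$ as the ``at most $s$ longer pieces'' condition finishes the proof.
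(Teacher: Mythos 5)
Your proof is correct, but it takes a more direct route than the paper's. You classify the members of $\mathcal{T}_n^{(s)}$ by the pair $(i,j)$ --- the number of longer pieces and the number of trominos among them --- and count each class directly as words in $\{r,d,t\}$, getting $\binom{n-i-j}{i}\binom{i}{j}$ tilings, each of weight $x^{2n-3(i+j)}$, which is precisely the $(i,j)$ term of \eqref{ine3}. The paper instead writes $T_{n+1}^{(s)}(x)=\sum_{i=0}^s B(n-i,i)(x)$ and first gives the inner polynomial $B(n,i)(x)$ its own combinatorial model: square-and-domino tilings of length $n$ with black and white squares, having $i$ dominos and white squares combined, weighted by $2\delta_1+\delta_2$; it then transfers this model onto $\mathcal{T}_n^{(s)}$ by a weight-preserving stretching bijection (domino $\to$ tromino, white square $\to$ domino). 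The underlying enumeration is the same multinomial in both cases --- your $\binom{n-i-j}{j,\,i-j,\,n-2i-j}$ is exactly the paper's $\binom{(n-i)-j}{j,\,i-j,\,n-2i-j}$ --- so the two arguments are computationally equivalent. What the paper's detour buys is a standalone interpretation of $B(n,i)(x)$, and in particular the fact that $B(n-i,i)(x)$ is the weight of the tilings in $\mathcal{T}_n$ with \emph{exactly} $i$ longer pieces; this fact is quoted verbatim later, in the proofs of Corollary \ref{t1c1} and Identity \ref{i2}. Your argument produces the same fact as a byproduct (your inner sum over $j$ at fixed $i$), so nothing downstream is lost, but you would want to record it explicitly; what your version buys is brevity and the absence of an auxiliary combinatorial object.
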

\begin{proof}
First note that $T_{n+1}^{(s)}(x)$ may be written as
\begin{equation}\label{e1}
T_{n+1}^{(s)}(x)=\sum_{i=0}^s B(n-i,i)(x),
\end{equation}
where $B(n,i)(x)=\sum_{j=0}^i \binom{i}{j}\binom{n-j}{i}x^{2n-i-3j}$.  We next observe that when $x=1$, the polynomial $B(n,i)(x)$ gives the cardinality of the set $\mathcal{B}_{n,i}$ consisting of square-and-domino tilings of length $n$ in which the squares come in two colors, black and white, and containing $i$ dominos and white squares combined.  To see this, note that members of $\mathcal{B}_{n,i}$ containing exactly $j$ dominos are in one-to-one correspondence with words in the alphabet $\{D,W,B\}$ containing $j$ $D$'s, $i-j$ $W$'s, and $n-i-j$ $B$'s and thus have cardinality
$$\binom{n-j}{j,i-j,n-i-j}=\frac{(n-j)!}{j!(i-j)!(n-i-j)!}=\binom{n-j}{i}\binom{i}{j}.$$
Summing over $j$ gives
$$|\mathcal{B}_{n,i}|=\sum_{j=0}^i \binom{i}{j}\binom{n-j}{i}.$$

Given $\pi \in \mathcal{B}_{n,i}$, let $\delta_1(\pi)$ and $\delta_2(\pi)$ record the number of black and white squares, respectively. Thus, if $\pi \in \mathcal{B}_{n,i}$ has $j$ dominos, then
$$2\delta_1(\pi)+\delta_2(\pi)=2(n-i-j)+i-j=2n-i-3j.$$
Considering all $j$, this implies $B(n,i)(x)$ is the distribution on $\mathcal{B}_{n,i}$ for the statistic $2\delta_1(\pi)+\delta_2(\pi)$.
Suppose now $\lambda \in \mathcal{B}_{n-i,i}$ is given and contains $j$ dominos for some $j$, where $0 \leq i \leq s$.  We replace each domino of $\lambda$ with a tromino and each white square with a domino.  The resulting tiling $\lambda'$ belongs to $\mathcal{T}_n^{(s)}$ and has $j$ trominos, $i-j$ dominos, and $n-2i-j$ squares.  Thus we have
$$2\delta(\lambda')+\nu(\lambda')=2\delta_1(\lambda)+\delta_2(\lambda)$$
for all $\lambda \in \mathcal{B}_{n-i,i}$.  By \eqref{e1}, it follows that $T_{n+1}^{(s)}(x)$ is the distribution on $\cup_{i=0}^s \mathcal{B}_{n-i,i}$ for $2\delta_1+\delta_2$, equivalently, for the distribution of $2\delta+\nu$ on $\mathcal{T}_n^{(s)}$.
\end{proof}

\emph{Remark:} Taking $x=1$ in the prior theorem shows that the cardinality of $\mathcal{T}_n^{(s)}$ is $t_{n+1}^{(s)}$.  Taking $s=\left\lfloor\frac{n}{2}\right\rfloor$ shows that $T_{n+1}(x)$ is the distribution for $2\delta+\mu$ on all of $\mathcal{T}_n$.

Using our interpretation for $T_{n}^{(s)}(x)$, one obtains the following recurrence formula from \cite{JR2} as a corollary.

\begin{corollary}\label{t1c1}
If $n \geq 2s+1$, then
\begin{equation}\label{c1e1}
T_{n+3}^{(s)}(x)=x^2T_{n+2}^{(s)}(x)+xT_{n+1}^{(s)}(x)+T_n^{(s)}(x)-(xB(n-s,s)(x)+B(n-1-s,s)(x)).
\end{equation}
\end{corollary}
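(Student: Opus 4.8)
The plan is to condition on the terminal tile of a member of $\mathcal{T}_{n+2}^{(s)}$, keeping track of how deleting a longer piece lowers the permitted number of longer pieces by one. By Theorem \ref{t1} we may view $T_{m+1}^{(s)}(x)$ as the weighted sum $\sum_{\pi \in \mathcal{T}_m^{(s)}} x^{2\delta(\pi)+\nu(\pi)}$, so that a square, a domino, and a tromino within $\pi$ carry the respective weights $x^2$, $x$, and $1$. First I would partition $\mathcal{T}_{n+2}^{(s)}$ into three classes according to whether the last tile is a square, a domino, or a tromino. Deleting a terminal square leaves an unrestricted member of $\mathcal{T}_{n+1}^{(s)}$ (the number of longer pieces is unchanged), contributing $x^2 T_{n+2}^{(s)}(x)$. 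Deleting a terminal domino removes one longer piece, so the remainder is a length-$n$ tiling with at most $s-1$ longer pieces, i.e., a member of $\mathcal{T}_n^{(s-1)}$, and contributes $x\,T_{n+1}^{(s-1)}(x)$; likewise, deleting a terminal tromino produces a member of $\mathcal{T}_{n-1}^{(s-1)}$ and contributes $T_n^{(s-1)}(x)$. As each of these three maps is reversible, this yields
\[
T_{n+3}^{(s)}(x) = x^2 T_{n+2}^{(s)}(x) + x\,T_{n+1}^{(s-1)}(x) + T_n^{(s-1)}(x).
\]

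Next I would convert the superscripts $s-1$ back to $s$. Formula \eqref{e1} gives $T_{m+1}^{(s)}(x)-T_{m+1}^{(s-1)}(x)=B(m-s,s)(x)$, since the only term dropped in passing from $s$ to $s-1$ is the summand with $i=s$. Applying this with $m=n$ and with $m=n-1$ produces $T_{n+1}^{(s-1)}(x)=T_{n+1}^{(s)}(x)-B(n-s,s)(x)$ and $T_n^{(s-1)}(x)=T_n^{(s)}(x)-B(n-1-s,s)(x)$. Substituting these into the displayed recurrence and grouping the two subtracted terms gives exactly \eqref{c1e1}.

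The one point requiring care is the hypothesis $n \geq 2s+1$, equivalently $s \leq \left\lfloor\frac{n-1}{2}\right\rfloor$. This is precisely the condition ensuring that the index $s$ is admissible for every polynomial that appears, in particular for $T_n^{(s)}(x)$, which ranges over tilings of length $n-1$; it guarantees that the $i=s$ summand is genuinely present there, so that the difference formula $T_n^{(s)}(x)-T_n^{(s-1)}(x)=B(n-1-s,s)(x)$ is valid and the boundary correction $B(n-1-s,s)(x)$ is the one recorded in \eqref{c1e1}. I expect this bookkeeping---matching the decrement in the longer-piece budget against the range restriction on $s$---to be the only delicate aspect, with the rest being a routine substitution.
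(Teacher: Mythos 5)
Your proof is correct and is essentially the paper's own argument: both condition on the final tile of a member of $\mathcal{T}_{n+2}^{(s)}$, observe that removing a terminal longer piece leaves a tiling with at most $s-1$ longer pieces, and use $B(n-s,s)(x)$ and $B(n-1-s,s)(x)$ as the correction terms. The only difference is presentational---the paper justifies the subtraction $T_{n+1}^{(s)}(x)-B(n-s,s)(x)=T_{n+1}^{(s-1)}(x)$ combinatorially (a tiling already containing $s$ longer pieces cannot receive an appended longer piece), whereas you obtain the same identity formally by differencing consecutive partial sums in \eqref{e1}.
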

\begin{proof}
We will show that the right-hand side of \eqref{c1e1} gives the weighted sum of all the members of $\mathcal{T}_{n+2}^{(s)}$ with respect to the statistic $2\delta+\nu$ by considering the final piece.  The first term clearly accounts for all tilings ending in a square.  On the other hand, if a member of $\mathcal{T}_{n+2}^{(s)}$ ends in a longer piece, then there can be at most $s-1$ additional longer pieces.  From the proof of Theorem \ref{t1} above, we have for each $m$ that $B(m-s,s)(x)$ gives the weight of all members of $\mathcal{T}_m^{(s)}$ containing exactly $s$ longer pieces. Note that addition of a longer piece to the end of a tiling already containing $s$ longer pieces is not allowed.  Thus, by subtraction, the total weight of all members of $\mathcal{T}_{n+2}^{(s)}$ ending in a domino is given by $x(T_{n+1}^{(s)}(x)-B(n-s,s)(x))$ and the weight of those ending in a tromino by $T_{n}^{(s)}(x)-B(n-1-s,s)(x)$, which completes the proof.
\end{proof}

\section{Some identities of $T_{n}^{(s)}(x)$}

In this section, we provide combinatorial proofs of some identities involving the incomplete tribonacci polynomials that generalize those shown in \cite{JR2} using algebraic methods.  We also consider some further identities that can be obtained from the combinatorial interpretation given in Theorem \ref{t1}.

In this section and the next, by the \emph{weight} of a subset $S$ of $\mathcal{T}_n$ or $\mathcal{T}_n^{(s)}$, we will mean the sum $\sum_{\lambda\in S} x^{2\delta(\lambda)+\nu(\lambda)}$.

The $x=1$ case of the following identity was shown in \cite{JR2} by an inductive argument.

\begin{identity}\label{i1}
If $h \geq 1$ and $n \geq 2s+2$, then
\begin{equation}\label{i1e1}
\sum_{i=0}^{h-1}x^{2(h-i-1)}T_{n+i}^{(s)}(x)=\frac{1}{1+x^3}\left(T_{n+h+2}^{(s+1)}(x)-x^{2h}T_{n+2}^{(s+1)}(x)+x^{2h+1}T_n^{(s)}(x)-xT_{n+h}^{(s)}(x)\right).
\end{equation}
\end{identity}
\begin{proof}
We show equivalently
$$T_{n+h+2}^{(s+1)}(x)=(1+x^3)\sum_{i=0}^{h-1}x^{2(h-i-1)}T_{n+i}^{(s)}(x)+x^{2h}T_{n+2}^{(s+1)}(x)+xT_{n+h}^{(s)}(x)-x^{2h+1}T_n^{(s)}(x).$$
For this, we'll argue that the right-hand side gives the total weight of all the members of $\mathcal{T}_{n+h+1}^{(s+1)}$.  First note that $x^{2h}T_{n+2}^{(s+1)}(x)$ gives the weight of the members of $\mathcal{T}_{n+h+1}^{(s+1)}$ in which positions $n+2$ through $n+h+1$ are covered by squares (i.e., the right-most longer piece ends at position $n+1$ or before).  On the other hand, the weight of all members of $\mathcal{T}_{n+h+1}^{(s+1)}$ whose right-most longer piece starts at position $n+i-1$ for some $0 \leq i \leq h-1$ is given by $\left(x^{2(h-i)+1}+x^{2(h-i-1)}\right)T_{n+i}^{(s)}(x)$ since such tilings $\lambda$ are of the form $\lambda=\lambda'dr^{h-i}$ or $\lambda=\lambda'tr^{h-i-1}$ for some tiling $\lambda'$ of length $n+i-1$ where $r^m$ denotes a sequence of $m$ squares. Note that $\lambda' \in \mathcal{T}_{n+i-1}^{(s)}$ since the number of longer pieces in $\lambda'$ is limited to $s$.  Summing over $0 \leq i \leq h-1$ gives the indexed sum on the right-hand side.  Next, the term $xT_{n+h}^{(s)}(x)$ accounts for all members of $\mathcal{T}_{n+h+1}^{(s+1)}$ whose final piece is a domino which were missed in the sum.  Finally, members of $\mathcal{T}_{n+h+1}^{(s+1)}$  of the form $\lambda'dr^h$, where $\lambda'$ has length $n-1$, were accounted for by both the $x^{2h}T_{n+2}^{(s+1)}(x)$ term and by the $i=0$ term of the indexed sum; hence, we must subtract their weight, $x^{2h+1}T_n^{(s)}(x)$, to correct for this double count.  Combining all of the cases above completes the proof.
\end{proof}

The following identity from \cite{JR2} gives a formula for the sum of all the incomplete tribonacci polynomials of a fixed order.

\begin{identity}\label{i2}
If $n \geq 1$, then
\begin{equation}\label{i2e1}
\sum_{s=0}^\ell T_{n+1}^{(s)}(x)=(\ell+1)T_{n+1}(x)-\sum_{i=0}^\ell \sum_{j=0}^i i\binom{i}{j}\binom{n-i-j}{i}x^{2n-3(i+j)},
\end{equation}
where $\ell=\left\lfloor\frac{n}{2}\right\rfloor$.
\end{identity}
\begin{proof}
Let $\lambda \in \mathcal{T}_n$ and suppose that it contains exactly $k$ longer pieces, where $0 \leq k \leq \ell$.  Then the weight of $\lambda$ is counted by each summand of $\sum_{s=0}^\ell T_{n+1}^{(s)}(x)$ such that $s \geq k$.  That is, the tiling $\lambda$ is counted $\ell+1-k$ times by this sum.  The proof of Theorem \ref{t1}
above shows that the total weight of all members of $\mathcal{T}_n$ containing exactly $k$ longer pieces is given by
$$\sum_{j=0}^k \binom{k}{j}\binom{n-k-j}{k}x^{2n-3(k+j)},$$
upon considering the number $j$ of dominos.  Thus, the only inner sum in the double sum on the right-hand side of \eqref{i2e1} in which $\lambda$ is counted occurs when $i=k$ and here it is counted $k$ times (due to the extra factor of $i=k$).  Since $\lambda$ is clearly counted $\ell+1$ times by the term $(\ell+1)T_{n+1}(x)$, we have by subtraction that $\lambda$ is counted $\ell+1-k$ times by the right-hand side of \eqref{i2e1} as well.  Since $\lambda$ was arbitrary, the identity follows.
\end{proof}

The $x=1$ case of the following identity was conjectured in \cite{JR2} and follows from the generating function proof given in \cite{KP}.  

\begin{identity}\label{i3}
If $n \geq 1$, then
\begin{equation}\label{i3e1}
\sum_{s=0}^{\ell} T_{n+1}^{(s)}(x)=(\ell+1)T_{n+1}(x)-\sum_{j=1}^{n-1}(xT_j(x)+T_{j-1}(x))T_{n-j}(x),
\end{equation}
\end{identity}
where $\ell=\left\lfloor\frac{n}{2}\right\rfloor$.
\begin{proof}
Suppose $\lambda \in \mathcal{T}_n$ has exactly $k$ longer pieces.  By the proof of the preceding identity, we need only show that the weight of $\lambda$ is counted $k$ times by the sum on the right-hand side of \eqref{i3e1}.  Note that $xT_j(x)T_{n-j}(x)$ gives the weight of all members of $\mathcal{T}_n$ in which a domino covers positions $j$ and $j+1$, while $T_{j-1}(x)T_{n-j}(x)$ gives the weight of those in which a tromino covers positions $j-1$, $j$, and $j+1$.  Thus, for each longer piece of $\lambda$, there is a term in the sum that counts the weight of $\lambda$, which implies that $\lambda$ is counted $k$ times by the sum, as desired.
\end{proof}

\emph{Remark:}  Comparing the $x=1$ cases of the preceding two identities, it follows that
$$\sum_{n\geq 1}a_nz^n=\frac{z^2+z^3}{(1-z-z^2-z^3)^2},$$
where
$$a_n=\sum_{i=0}^{\left\lfloor\frac{n}{2}\right\rfloor} \sum_{j=0}^i i\binom{i}{j}\binom{n-i-j}{i},$$
which can also be shown directly using the methods of \cite[Section 4.3]{W} (see \cite{KP}).

The next three identities follow from the combinatorial interpretation of $T_n^{(s)}(x)$ given in Theorem \ref{t1} and do not occur in \cite{JR2}.

\begin{identity}\label{i4}
If $n \geq 2s+1$, then
\begin{equation}\label{i4e1}
T_{n+1}^{(s)}(x)=\sum_{i=0}^s(x^{i+2}T_{n-2i}^{(s-i)}(x)+x^iT_{n-2i-2}^{(s-i-1)}(x)).
\end{equation}
\end{identity}
\begin{proof}
Suppose a member of $\mathcal{T}_{n}^{(s)}$ ends in exactly $i$ dominos, where $0 \leq i \leq s$.  If the right-most piece that is not a domino is a square, then the tiles coming to the left of this square constitute a member of $\mathcal{T}_{n-2i-1}^{(s-i)}$ and thus the weight of the corresponding subset of $\mathcal{T}_{n}^{(s)}$ is $x^{i+2}T_{n-2i}^{(s-i)}(x)$.  On the other hand, if the right-most non-domino piece is a tromino, then the tiles to the left of this tromino form a member of $\mathcal{T}_{n-2i-3}^{(s-i-1)}$ and thus the weight of the corresponding subset is $x^iT_{n-2i-2}^{(s-i-1)}(x)$.  Considering all possible $i$ gives \eqref{i4e1}.
\end{proof}

Our next formula relates the incomplete tribonacci polynomials to the trinomial coefficients.

\begin{identity}\label{i5}
If $n \geq 3s+1$, then
\begin{equation}\label{i5e1}
T_n^{(s)}(x)=\sum_{i=0}^s \sum_{j=0}^{s-i} \binom{s}{i,j,s-i-j}x^{2s-i-2j}T_{n-s-i-2j}^{(s-i-j)}(x).
\end{equation}
\end{identity}
\begin{proof}
Suppose that there are $i$ dominos and $j$ trominos among the final $s$ tiles within a member of $\mathcal{T}_{n-1}^{(s)}$, where $n \geq 3s+1$.  Then there are $\binom{s}{i,j,s-i-j}$ ways to arrange these tiles, which contribute $x^{2(s-i-j)+i}$ towards the weight, with the remaining tiles forming a member of $\mathcal{T}_{n-s-i-2j-1}^{(s-i-j)}$.  Considering all possible $i$ and $j$ gives \eqref{i5e1}.
\end{proof}

The incomplete Fibonacci polynomials introduced in \cite{JR1} are given as
$$F_n^{(s)}(x)=\sum_{r=0}^s \binom{n-r-1}{r}x^{n-2r-1}, \qquad 0 \leq s \leq \left\lfloor\frac{n-1}{2}\right\rfloor.$$
Our next identity relates the incomplete Fibonacci and tribonacci polynomials.

\begin{identity}\label{i6}
If $n \geq 2s$, then
\begin{equation}\label{i6e1}
T_{n+1}^{(s)}(x)=x^{n/2}F_{n+1}^{(s)}(x^{3/2})+\sum_{i=1}^{n-2}x^{(i-1)/2}\sum_{j=0}^{s-1} (T_{n-i-1}^{(j)}(x)-T_{n-i-1}^{(j-1)}(x))F_i^{(s-j-1)}(x^{3/2}).
\end{equation}
\end{identity}
\begin{proof}
First note that the weight of all members of $\mathcal{T}_n^{(s)}$ that contain no trominos is given by
$$\sum_{r=0}^s\binom{n-r}{r}x^{2n-3r}=x^{n/2}F_{n+1}^{(s)}(x^{3/2}).$$
So assume a member of $\mathcal{T}_n^{(s)}$ contains at least one tromino and that the left-most tromino covers positions $i$ through $i+2$.  Suppose further that there are exactly $r$ dominos to the left of the left-most tromino.  Then the weight of all such members of $\mathcal{T}_n^{(s)}$ is given by $\binom{i-r-1}{r}x^{2i-3r-2}T_{n-i-1}^{(s-r-1)}(x)$. Summing over the possible $i$ and $r$ implies that the total weight of all the members of $\mathcal{T}_n^{(s)}$ containing at least one tromino is
$$\sum_{i=1}^{n-2}\sum_{r=0}^{s-1}\binom{i-r-1}{r}x^{2i-3r-2}T_{n-i-1}^{(s-r-1)}(x).$$

To obtain the expression in \eqref{i6e1}, we write $T_{n-i-1}^{(s-r-1)}$ as $\sum_{j=0}^{s-r-1}(T_{n-i-1}^{(j)}-T_{n-i-1}^{(j-1)})$, where $T_{n-i-1}^{(-1)}=0$.  We then obtain a total weight formula of
\begin{align*}
&\sum_{i=1}^{n-2}\sum_{r=0}^{s-1} \binom{i-r-1}{r}x^{2i-3r-2}\sum_{j=0}^{s-r-1}(T_{n-i-1}^{(j)}-T_{n-i-1}^{(j-1)})\\
&=\sum_{i=1}^{n-2}\sum_{j=0}^{s-1}(T_{n-i-1}^{(j)}-T_{n-i-1}^{(j-1)})\sum_{r=0}^{s-j-1}\binom{i-r-1}{r}x^{2i-3r-2}\\
&=\sum_{i=1}^{n-2}\sum_{j=0}^{s-1}(T_{n-i-1}^{(j)}-T_{n-i-1}^{(j-1)})x^{(i-1)/2}F_i^{(s-j-1)}(x^{3/2}),
\end{align*}
which gives \eqref{i6e1}.
\end{proof}

\section{Generating function formula for $T_n^{(s)}(x)$}

The generating function formula for the incomplete tribonacci numbers was found in \cite{JR2} and a formula was requested for the tribonacci polynomials.  The next result provides such a formula.  We remark that our method is more combinatorial than that used in \cite{JR2} in the case $x=1$ and thus supplies an alternate proof in that case.

\begin{theorem}\label{t2}
Let $Q_s(z)$ be the generating function for the incomplete tribonacci polynomials $T_n^{(s)}(x)$, where $n \geq 2s+1$.  Then
\begin{equation}\label{t2e1}
\frac{Q_s(z)}{z^{2s+1}}=\frac{T_{2s+1}(x)+(T_{2s-1}(x)+xT_{2s}(x))z+T_{2s}(x)z^2-z^2\left(\frac{x+z}{1-x^2z}\right)^{s+1}}{1-x^2z-xz^2-z^3}.
\end{equation}
\end{theorem}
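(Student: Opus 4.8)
The plan is to convert the recurrence of Corollary \ref{t1c1} into a functional equation for the generating function $Q_s(z)=\sum_{n\ge 2s+1}T_n^{(s)}(x)z^n$ and then solve for $Q_s(z)$. The only ingredient beyond routine bookkeeping is a closed form for the generating function of the correction terms $B(m-s,s)(x)$ appearing in \eqref{c1e1}, which I would obtain combinatorially from the interpretation in Theorem \ref{t1}.

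First I would record the auxiliary identity
$$\sum_{m\ge 2s}B(m-s,s)(x)z^m=\frac{z^{2s}(x+z)^s}{(1-x^2z)^{s+1}}.$$
By the proof of Theorem \ref{t1}, $B(m-s,s)(x)$ is the weight (with respect to $2\delta+\nu$) of those members of $\mathcal{T}_m$ containing exactly $s$ longer pieces. Such a tiling is an ordered sequence of $s$ longer pieces, each a domino of weight $xz^2$ or a tromino of weight $z^3$, together with runs of squares (each of weight $x^2z$) filling the $s+1$ gaps around them. Since each gap contributes a factor $1/(1-x^2z)$ and each longer piece a factor $xz^2+z^3=z^2(x+z)$, the product yields the stated closed form; I will call the right-hand side $R_s(z)$.

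Next I would apply Corollary \ref{t1c1}. Writing \eqref{c1e1} with $m=n+3$, the recurrence
$$T_m^{(s)}(x)=x^2T_{m-1}^{(s)}(x)+xT_{m-2}^{(s)}(x)+T_{m-3}^{(s)}(x)-xB(m-3-s,s)(x)-B(m-4-s,s)(x)$$
holds for all $m\ge 2s+4$. Multiplying by $z^m$, summing, and expressing each shifted sum through $Q_s(z)$ and the boundary data $T_{2s+1}^{(s)},T_{2s+2}^{(s)},T_{2s+3}^{(s)}$ gives, after collecting the $Q_s(z)$ terms,
$$Q_s(z)(1-x^2z-xz^2-z^3)=(\text{boundary polynomial})-(xz^3+z^4)R_s(z),$$
where the two $B$-sums on the right reassemble into $(xz^3+z^4)R_s(z)$ up to the single lowest-order term $B(s,s)(x)=x^s$ that must be split off from the first of them (because its summation begins one index later than $R_s(z)$).

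The remaining work is bookkeeping of the boundary polynomial. Since a tiling of length $2s$ or $2s+1$ can contain at most $s$ longer pieces, $T_{2s+1}^{(s)}(x)=T_{2s+1}(x)$ and $T_{2s+2}^{(s)}(x)=T_{2s+2}(x)$, whereas a tiling of length $2s+2$ may contain $s+1$ longer pieces (all dominos), so $T_{2s+3}^{(s)}(x)=T_{2s+3}(x)-x^{s+1}$. I expect the stray $-x^{s+1}z^{2s+3}$ produced here to cancel exactly against the $+x^{s+1}z^{2s+3}$ coming from the $B(s,s)(x)=x^s$ term split off from $R_s(z)$ above; verifying this cancellation is the point I would watch most carefully. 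Substituting these boundary values and simplifying the coefficients of $z^{2s+2}$ and $z^{2s+3}$ via the tribonacci recurrence ($T_{2s+2}-x^2T_{2s+1}=xT_{2s}+T_{2s-1}$ and $T_{2s+3}-x^2T_{2s+2}-xT_{2s+1}=T_{2s}$), together with the simplification $(xz^3+z^4)R_s(z)=z^{2s+3}\bigl(\tfrac{x+z}{1-x^2z}\bigr)^{s+1}$, collapses the boundary polynomial into $z^{2s+1}\bigl(T_{2s+1}(x)+(T_{2s-1}(x)+xT_{2s}(x))z+T_{2s}(x)z^2-z^2(\tfrac{x+z}{1-x^2z})^{s+1}\bigr)$. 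Dividing through by $z^{2s+1}(1-x^2z-xz^2-z^3)$ then yields \eqref{t2e1}.
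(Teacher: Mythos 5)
Your proposal is correct, but it takes a genuinely different route from the paper's proof. You run the standard generating-function machine on the defective recurrence \eqref{c1e1} of Corollary \ref{t1c1}: the two ingredients beyond bookkeeping are the closed form $\sum_{m\geq 2s}B(m-s,s)(x)z^m=\frac{z^{2s}(x+z)^s}{(1-x^2z)^{s+1}}$, which your gap-and-longer-piece decomposition establishes correctly, and the boundary values, where your key observations --- that $T_{2s+1}^{(s)}=T_{2s+1}$ and $T_{2s+2}^{(s)}=T_{2s+2}$ but $T_{2s+3}^{(s)}(x)=T_{2s+3}(x)-x^{s+1}$, and that this stray $-x^{s+1}z^{2s+3}$ cancels exactly against the term $xB(s,s)(x)z^{2s+3}=x^{s+1}z^{2s+3}$ split off from the first $B$-sum --- are both accurate (I verified the algebra; after the cancellation the tribonacci recurrence reduces the $z^{2s+2}$ and $z^{2s+3}$ coefficients to $T_{2s-1}+xT_{2s}$ and $T_{2s}$, as you claim). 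The paper never touches Corollary \ref{t1c1}; instead it proves the convolution identity \eqref{t2e2}, $T_n^{(s)}(x)=T_n(x)-\sum_{i=0}^{n-2s-1}r_i(x)T_{n-2s-i}(x)$, by cutting a tiling having more than $s$ longer pieces at the end of its $(s+1)$-st longer piece, and pairs it with the decomposition \eqref{t2e3} of $T_n$ obtained by conditioning on the tile covering the boundary between cells $2s$ and $2s+1$; the numerator $T_{2s+1}+(T_{2s-1}+xT_{2s})z+T_{2s}z^2$ then appears at once, with no boundary corrections or cancellations to track. The trade-off: the paper's route produces the structural identity \eqref{t2e2} relating $T_n^{(s)}$ to $T_n$ (a stated aim of the paper) and keeps the summation clean, whereas yours is more mechanical given that Corollary \ref{t1c1} is already proved, at the price of delicate but routine boundary analysis. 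The auxiliary series are essentially the same object in both arguments: your $R_s(z)$ and the paper's $\sum_{i\geq 0}r_iz^i=z^2\left(\frac{x+z}{1-x^2z}\right)^{s+1}$ satisfy $z^{2s}\sum_{i\geq 0}r_iz^i=(xz^2+z^3)R_s(z)$, reflecting that a tiling with exactly $s+1$ longer pieces ending in a longer piece is a tiling with exactly $s$ longer pieces followed by a domino or tromino --- and you actually derive your version combinatorially, where the paper defers this computation to Wilf's methods.
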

\begin{proof}
Let $r_n=r_n(x)$ be given by
$$r_n=\sum_{j=0}^s \binom{s}{j}\binom{n+s-j-2}{s}x^{2n+s-3j-3}+\sum_{j=0}^s\binom{s}{j}\binom{n+s-j-3}{s}x^{2n+s-3j-6}, \qquad n \geq 3,$$
with $r_0=r_1=0$ and $r_2=x^{s+1}$.

We claim that $r_i(x)$ gives the total weight with respect to the statistic $2\delta+\nu$ of all the members of $\mathcal{T}_{i+2s}$ containing exactly $s+1$ longer pieces and ending in a longer piece, the subset of which we will denote by $\mathcal{A}$.  To show this, first note that $r_i(x)$ evaluated at $x=1$ is seen to give the number of square-and-domino tilings of length $i+s-2$ or $i+s-3$ in which squares are black or white and having exactly $s$ white squares and dominos combined.  We then increase the length of each white square and each domino by one and add a domino to the end if the original tiling had length $i+s-2$ and add a tromino to the end if it had length $i+s-3$.  This yields all members of $\mathcal{A}$ in a bijective manner and thus implies $r_i(x)$ at $x=1$ gives the cardinality of $\mathcal{A}$.  Note that members of $\mathcal{A}$ ending in a domino contain $i-j-2$ squares, $s-j+1$ dominos, and $j$ trominos for some $0 \leq j \leq s$, while members of $\mathcal{A}$ ending is a tromino contain $i-j-3$ squares, $s-j$ dominos, and $j+1$ trominos for some $j$.  Summing over $j$ then implies that $r_i(x)$ is the distribution for the statistic $2\delta+\nu$ on $\mathcal{A}$, as claimed.

By the interpretation for $r_i(x)$ just described, the product $r_i(x)T_{n-2s-i}(x)$ gives the total weight of all members of $\mathcal{T}_{n-1}$ containing at least $s+1$ longer pieces in which the $(s+1)$-st longer piece ends at position $i+2s$ since the final $n-2s-i-1$ positions of such a member of $\mathcal{T}_{n-1}$ may be covered by any tiling of the same length.  Summing over all possible $i$ then gives the total weight of all members of $\mathcal{T}_{n-1}$ containing \emph{strictly more} than $s$ longer pieces.  Subtracting from $T_n(x)$ thus gives the weight of all members of $\mathcal{T}_{n-1}$ containing \emph{at most} $s$ longer pieces and implies the following identity:
\begin{equation}\label{t2e2}
T_n^{(s)}(x)=T_n(x)-\sum_{i=0}^{n-2s-1}r_i(x)T_{n-2s-i}(x), \qquad n \geq 2s+1.
\end{equation}

In order to find a closed form expression for $Q_s(z)$ using \eqref{t2e2}, we express $T_n=T_n(x)$ as follows:
\begin{equation}\label{t2e3}
T_n=T_{n-2s}T_{2s+1}+T_{n-2s-1}(T_{2s-1}+xT_{2s})+T_{n-2s-2}T_{2s}, \qquad n \geq 2s+1.
\end{equation}
We provide a combinatorial proof of \eqref{t2e3} as follows.  Note that \eqref{t2e3} is clearly true if $s=0$ or if $n=2s+1$ since $T_0=T_{-1}=0$, so we may assume $s\geq 1$ and $n \geq 2s+2$.  Observe first that the $T_{n-2s}T_{2s+1}$ term gives the weight of all members of $\mathcal{T}_{n-1}$ in which there is no piece covering the boundary between positions $2s$ and $2s+1$.  On the other hand, the total weight of the members of $\mathcal{T}_{n-1}$ in which a domino covers this boundary is given by $xT_{n-2s-1}T_{2s}$.  Finally, if a tromino covers the boundary between positions $2s$ and $2s+1$, then that tromino covers either positions $2s-1$, $2s$, and $2s+1$ or positions $2s$, $2s+1$, and $2s+2$.  In the former case, the weight of the corresponding members of $\mathcal{T}_{n-1}$ is $T_{n-2s-1}T_{2s-1}$, while in the latter it would be $T_{n-2s-2}T_{2s}$.  Combining all of the cases above gives \eqref{t2e3}.

Multiplying both sides of the equation
$$T_n^{(s)}=T_{n-2s}T_{2s+1}+T_{n-2s-1}(T_{2s-1}+xT_{2s})+T_{n-2s-2}T_{2s}-\sum_{i=0}^{n-2s-1}r_iT_{n-2s-i}$$
by $z^n$ and summing over $n \geq 2s+1$ yields
$$\frac{Q_s(z)}{z^{2s}}=\left(T_{2s+1}+(T_{2s-1}+xT_{2s})z+T_{2s}z^2-\sum_{i\geq0}r_iz^i\right)
\cdot\sum_{n\geq1}T_nz^n.$$
The proof is completed by noting
$$\sum_{i\geq0}r_iz^i=z^2\left(\frac{x+z}{1-x^2z}\right)^{s+1}$$
and
$$\sum_{n\geq1}T_nz^n=\frac{z}{1-x^2z-xz^2-z^3},$$
the former being computed by the methods given in \cite[Section 4.3]{W}.
\end{proof}

Taking $x=1$ in the prior theorem yields the following result.

\begin{corollary}\label{t2c1}
Let $q_s(z)$ be the generating function for the incomplete tribonacci numbers $t_n^{(s)}$.  Then
\begin{equation}\label{t2e1}
\frac{q_s(z)}{z^{2s+1}}=\frac{t_{2s+1}+(t_{2s-1}+t_{2s})z+t_{2s}z^2-z^2\left(\frac{1+z}{1-z}\right)^{s+1}}{1-z-z^2-z^3}.
\end{equation}
\end{corollary}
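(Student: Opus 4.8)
The plan is to obtain the corollary by simply specializing the identity of Theorem \ref{t2} at $x=1$, so that no new combinatorial argument is required. First I would recall the two facts that make the specialization meaningful. By definition the incomplete tribonacci number is $t_n^{(s)}=T_n^{(s)}(1)$, and the ordinary tribonacci polynomials reduce to the tribonacci numbers at $x=1$: setting $x=1$ in the recurrence $T_n(x)=x^2T_{n-1}(x)+xT_{n-2}(x)+T_{n-3}(x)$ with $T_0(x)=0$, $T_1(x)=1$, $T_2(x)=x^2$ recovers precisely the tribonacci recurrence $t_n=t_{n-1}+t_{n-2}+t_{n-3}$ together with the initial data $t_0=0$, $t_1=t_2=1$, so that $T_m(1)=t_m$ for every $m\geq 0$.

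Next I would observe that evaluation at $x=1$ may be carried out termwise on the generating function. Since $Q_s(z)=\sum_{n\geq 2s+1}T_n^{(s)}(x)z^n$ is a formal power series in $z$ whose coefficients are polynomials in $x$, substituting $x=1$ into each coefficient produces $\sum_{n\geq 2s+1}T_n^{(s)}(1)z^n=\sum_{n\geq 2s+1}t_n^{(s)}z^n=q_s(z)$. Hence $Q_s(z)$ specializes to $q_s(z)$, and it remains only to specialize the right-hand side of the formula in Theorem \ref{t2}. For this I would substitute $x=1$ into each ingredient: $T_{2s+1}(1)=t_{2s+1}$, the middle coefficient $T_{2s-1}(1)+1\cdot T_{2s}(1)=t_{2s-1}+t_{2s}$, and $T_{2s}(1)=t_{2s}$, while the factor $\left(\frac{x+z}{1-x^2z}\right)^{s+1}$ becomes $\left(\frac{1+z}{1-z}\right)^{s+1}$ and the denominator $1-x^2z-xz^2-z^3$ becomes $1-z-z^2-z^3$. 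Assembling these yields exactly the claimed formula for $q_s(z)/z^{2s+1}$.

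The only point requiring any care, and the closest thing to an obstacle, is the justification that the $x=1$ specialization commutes with the formal power series manipulations underlying Theorem \ref{t2}. This is immediate once one notes that every series in $z$ appearing in the derivation, in particular $\sum_{i\geq 0}r_i(x)z^i=z^2\left(\frac{x+z}{1-x^2z}\right)^{s+1}$ and $\sum_{n\geq 1}T_n(x)z^n=\frac{z}{1-x^2z-xz^2-z^3}$, as well as the identities \eqref{t2e2} and \eqref{t2e3}, has coefficients that are polynomials in $x$. Thus evaluation at $x=1$ is a ring homomorphism on the coefficient ring and passes through each step of the proof of Theorem \ref{t2} unchanged, so the specialized identities remain valid and the corollary follows.
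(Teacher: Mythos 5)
Your proposal is correct and matches the paper exactly: the paper's entire proof is the remark that taking $x=1$ in Theorem \ref{t2} yields the result, which is precisely your specialization argument (with the added, harmless justification that evaluation at $x=1$ is a ring homomorphism commuting with the formal power series manipulations).
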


\emph{Remark:} Equation \eqref{t2e1} appears as Theorem 8 of \cite{JR2}.  We note however that there was a slight misstatement of this theorem; in particular, there should be no $-2$ in the factor multiplying $z^2$ in the numerator on the right-hand side.

%===========================================================================

\end{document}